\newcommand{\B}[1]{\mathbb{#1}}
\renewcommand{\leq}{\leqslant}
\renewcommand{\geq}{\geqslant}
\DeclareMathOperator{\argmin}{argmin}
\renewcommand{\leq}{\leqslant}
\renewcommand{\geq}{\geqslant}
\numberwithin{equation}{section}
\newtheorem{theorem}{Theorem}[section]
\newtheorem{proposition}[theorem]{Proposition}
\newtheorem{lemma}[theorem]{Lemma}
\theoremstyle{definition}
\title{Persistent Homology, Matroids and Cobordisms}
 \author{İsmail Güzel}
 \email{iguzel@itu.edu.tr}
 \author{Atabey Kaygun}
 \email{kaygun@itu.edu.tr}
 \address{Department of Mathematics, Istanbul Technical University, Istanbul, Turkey.}
\begin{document}

 \begin{abstract}
   The homological information about a filtered simplicial complex over the poset of
   positive real numbers is often presented by a barcode which depicts the evolution of the
   associated Betti numbers. However, there is a wonderfully complex combinatorics
   associated with the homology classes of a filtered complex, and one can do more than just
   counting them over the index poset. Here, we show that this combinatorial information can
   be encoded by filtered matroids, or even better, by rooted forests. We also show that
   these rooted forests can be realized as cobordisms.
 \end{abstract}

\maketitle

\section*{Introduction}

Persistent homology is a tool data scientists recently started using to understand
collections of data points embedded in a metric space~\cite{carlsson2005persistencebarcode,
  ghrist2008barcodes}. The basic process is one that from a given set of data points one
constructs a complex filtered over the poset of positive real numbers\footnote{We review
  some of these complexes in Section~\ref{subsect:zoo}.} (the poset of \emph{scale
  parameters}), and then calculates the Betti numbers of this complex at different scale
parameters. Then one expects that the gradual evolution of the Betti numbers of the filtered
complex does yield some information about the topology of the subspace from which our data
points are sampled.

The fact that one can capture the homological and homotopical invariants of a space from a
finite collection of points sampled from the space under certain guarantees is an old
idea~\cite{Borsuk1948}. However, in the absence of any information whether these guarantees
are satisfied, one has to construct a sample of invariants from available \emph{local}
information by playing with the notion of \emph{proximity} via a scale parameter that we
alluded above. Since we do not know which range of scale parameters truly capture the
topological invariants of the underlying space, one must calculate these invariants at
different scale parameters and investigate how these different calculations fit with each
other.

It is perhaps a historical coincidence that the development of persistent homology mirrors
that of the ordinary homology. In the beginning topologists calculated homology as Betti
numbers, and it was Emmy Noether who first observed that these homological invariants had to
be considered as abelian groups~\cite{Hilton_1988}. Similarly, in the beginning the
practitioners represented persistent homology as barcodes\footnote{Even though there is now
  a plethora of different representations, such as persistence
  diagrams~\cite{cohen2007stability}, landscapes~\cite{bubenik2015statistical},
  images~\cite{image2017persistence}, terraces~\cite{terrace2018persistence},
  entropy~\cite{merelli2015topological} and curves~\cite{chung2019persistence}; they all are
  derived from the barcode
  representation~\cite{carlsson2005persistencebarcode,ghrist2008barcodes}.} which are
records of how Betti numbers evolve as the scale parameter varies. Then it is clear that we
must consider persistent homology as a filtered abelian group should we make the same leap.

In this paper, we propose a new combinatorial description, which we call \emph{the
  cophenetic matroid}, for homological groups that vary over a scale parameter. This
description squarely fits between barcodes and filtered vector spaces.\footnote{Filtered
  matroids have been used before by Henselman and Ghrist in \cite{HenselmanGhrist16},
  however, their aim was to develop and implement efficient algorithms for computing cosheaf
  homology. Moreover, they still used barcodes as their medium of representation for their
  persistent homology calculations.}  We also show that one can represent these filtered
matroids via rooted forests that come from specific cobordisms of disjoint unions of
spheres.

\subsection*{Plan of the article}

In Section~\ref{sect:prelim}, we recall basic facts about matroids, filtered simplicial
complexes, and persistent homology. In Section~\ref{sect:filteredMatroids} we define
filtered matroids and show that every matroid filtered over the set of positive numbers can
be represented by a rooted forest.  In Section~\ref{sect:copheneticMatroid}, we define the
cophenetic matroid that we are going to use to represent persistent homology. We then
recover the non-archimedean metric we defined in~\cite{GuzelKaygun22} using the cophenetic
matroid. Finally in Section~\ref{sect:cobordism}, we construct the cobordism that lies
underneath the cophenetic matroid of a filtered complex.

\section{Preliminaries}\label{sect:prelim}


\subsection{Posets and order ideals}

A poset is a set $P$ together with an anti-symmetric reflexive and transitive relation
$\leq_P$.  A function $f\colon P\to Q$ between two posets is called \emph{order preserving}
if $x\leq_P y$ implies $f(x)\leq_Q f(y)$ for every $x,y\in P$.  Given two order preserving
maps $f,g\colon P\to Q$ we say that $g$ dominates $f$ if $f(x)\leq_Q g(x)$ for every
$x\in P$.  A subset $\mathscr{I}$ of a poset $P$ is called an \emph{order ideal} if for
every $y\in \mathscr{I}$ and $x\in P$ if $x\leq y$ then $x\in \mathscr{I}$.

\subsection{Matroids}

A \emph{matroid} $M$ is a pair $(E, \mathscr{I}) $ where $ E $ is a non-empty set and
$\mathscr{I}$ is a non-empty order ideal in the poset $(2^E,\subseteq)$ such that for every
$ A,B\in \mathscr{I} $ with $ |A|<|B| $ there is an element $ x \in B\backslash A $ such
that $ \{x\}\cup A \in \mathscr{I} $. Elements of $\mathscr{I}$ are called \emph{independent
  sets}.

\subsection{The rank function of a matroid}

Let $ M $ be a matroid on a finite ground set $ E $.  The rank $r(X)$ of a subset
$X\subseteq E$ is the cardinality of the largest independent set contained in $ X $.  In
other words
\begin{equation*}
r(X) = \max \{ |A|\in\B{N} \mid A\subseteq X \text{ and } A \in \mathscr{I}\}	
\end{equation*}
Notice that the rank function $ r\colon 2^{E}\rightarrow \B{N} $ is order preserving and is
dominated by the cardinality function $|\ \cdot\ |\colon 2^E\to \B{N}$.



We can convert matroids to rank functions and vice versa.  To show this we need the
following definition: A poset map $r\colon 2^E\to \B{N}$ is called \emph{semimodular} or
\emph{submodular} if
\begin{equation}\label{semimodular}
  r(A\cup B) \leq r(A) + r(B) - r(A \cap B)
\end{equation}
for every $A,B\in 2^E$.  A submodular map $r$ is called \emph{modular} if the inequality is
replaced with an equality, i.e. when $r$ satisfies the \emph{inclusion/exclusion} principle.

\begin{theorem}[{\cite[Chap. 2.5, pg 69]{gordon2012matroids}}]\label{thm:rank-vs-matroid}
  Let $E$ be a finite set and let $r\colon 2^E\to \B{N}$ be a poset map dominated by the cardinality function. Then  $r$ is the rank function of a matroid if and only if  $r$ is \emph{submodular}.
\end{theorem}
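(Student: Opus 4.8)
The plan is to prove the two implications separately, after unpacking the hypotheses. By assumption $r$ is order preserving (so $A \subseteq B$ forces $r(A) \le r(B)$) and dominated by cardinality (so $0 \le r(A) \le |A|$, and in particular $r(\emptyset) = 0$). The one extra input I would establish before anything else is a \emph{unit-increment} estimate: for any $A \subseteq E$ and any $x \in E$ one has $r(A) \le r(A \cup \{x\}) \le r(A) + 1$. The left inequality is monotonicity; the right one, when $x \notin A$, follows from submodularity applied to $A$ and $\{x\}$ together with $r(\{x\}) \le 1$ and $r(\emptyset) = 0$. This estimate is not needed for the forward direction, but it is the workhorse of the converse.

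For the forward implication, suppose $r$ is the rank function of a matroid $M = (E, \mathscr{I})$. Given $A, B \subseteq E$, I would choose a maximal independent subset $I$ of $A \cap B$ and, using the augmentation axiom of $M$, extend it to a maximal independent subset $J$ of $A \cup B$; then $|I| = r(A \cap B)$ and $|J| = r(A \cup B)$, since all maximal independent subsets of a set have the same cardinality, namely its rank. As $J \cap A$ and $J \cap B$ are independent subsets of $A$ and $B$ respectively, we get $|J \cap A| \le r(A)$ and $|J \cap B| \le r(B)$. The rest is elementary inclusion/exclusion for finite sets: $(J \cap A) \cup (J \cap B) = J$ and $(J \cap A) \cap (J \cap B) = J \cap (A \cap B) \supseteq I$, so that $r(A) + r(B) \ge |J \cap A| + |J \cap B| = |J| + |J \cap (A \cap B)| \ge r(A \cup B) + r(A \cap B)$, which is exactly \eqref{semimodular}.

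For the converse, assume $r$ is submodular and set $\mathscr{I} = \{A \subseteq E : r(A) = |A|\}$. I would verify the two matroid axioms. That $\mathscr{I}$ is a non-empty order ideal follows from the unit-increment lemma: $\emptyset \in \mathscr{I}$, and if $B \subseteq A \in \mathscr{I}$ then adding the $|A| - |B|$ elements of $A \setminus B$ one at a time gives $|A| = r(A) \le r(B) + (|A| - |B|)$, forcing $r(B) \ge |B|$ and hence $r(B) = |B|$. For the exchange axiom, given $A, B \in \mathscr{I}$ with $|A| < |B|$, I would argue by contradiction: if no $x \in B \setminus A$ satisfies $A \cup \{x\} \in \mathscr{I}$, then $r(A \cup \{x\}) = r(A)$ for every such $x$, and a short induction on $|S|$ using submodularity on the pair $A \cup S$ and $A \cup \{x\}$ shows $r(A \cup S) = r(A)$ for all $S \subseteq B \setminus A$; taking $S = B \setminus A$ yields $r(A \cup B) = r(A) = |A| < |B| = r(B)$, contradicting monotonicity. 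Finally I would check that the rank function $r_M$ of this matroid equals $r$: the inequality $r_M \le r$ is immediate from monotonicity, and the reverse follows by taking a maximal independent subset $A$ of a given $X$ and using the same induction to conclude $r(X) = r(A) = |A| = r_M(X)$.

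The main obstacle is the exchange axiom in the converse, where the submodular inequality must be leveraged correctly: the crux is recognizing that for $x \notin A \cup S$ the intersection $(A \cup S) \cap (A \cup \{x\})$ collapses to $A$, so that submodularity together with the inductive hypothesis pins $r(A \cup S \cup \{x\})$ down to $r(A)$. Everything else reduces to monotonicity, domination, and finite inclusion/exclusion.
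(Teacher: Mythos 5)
The paper does not actually prove this statement --- it is quoted verbatim from Gordon and McNulty's textbook \cite{gordon2012matroids} and used as a black box, so there is no internal proof to compare against. Your argument is correct and complete, and it is essentially the standard textbook proof of the rank-axiom characterization: the forward direction by augmenting a maximal independent subset of $A\cap B$ to one of $A\cup B$ and applying inclusion/exclusion, and the converse by defining $\mathscr{I}=\{A : r(A)=|A|\}$, using the unit-increment estimate for the order-ideal property, the submodularity-induction contradiction for the exchange axiom, and the final check that the induced rank function recovers $r$.
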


Thus Theorem~\ref{thm:rank-vs-matroid} gives us license to replace any matroid with its rank function, and vice versa.

\subsection{Morphisms of matroids}

Assume $(E,r_E)$ and $(F,r_F)$ are two matroids given by rank functions. A set map
$f\colon E\to F$ is called \emph{a morphism of matroids} if $r_F(f(A))\leq r_E(A)$ for every
finite subset $A$ of $E$.  One can easily see that identity map is a morphism of matroids,
and composition of any two morphism is again a morphism. So, we have a category of matroids.

\subsection{Induced matroids}

Let $(E,r_E)$ be a matroid and assume $\pi \colon F\to E$ is any function. Let us define
\[ \pi^*r_E(A):= r_E(\pi(A)) \] for every finite subset $A$ of $F$. The following Lemma is
pretty straightforward and its proof is left to the reader.
\begin{lemma}\label{lem:induced-matroid}
  The pair $(F,r_F)$ is a matroid and $\pi\colon (E,r_E)\to(F,\pi^*r_E)$ is a morphism of
  matroids.
\end{lemma}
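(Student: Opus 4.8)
The plan is to establish the matroid claim by verifying the hypotheses of Theorem~\ref{thm:rank-vs-matroid} for $r_F:=\pi^*r_E$, and then to read off the morphism directly from the definition of $\pi^*$. Throughout I would use that $r_E$ is order preserving, is dominated by cardinality, and---being a matroid rank function---is submodular, the last of these by Theorem~\ref{thm:rank-vs-matroid}.

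First I would check that $\pi^*r_E\colon 2^F\to\B{N}$ is a poset map dominated by the cardinality function. Monotonicity is immediate: if $A\subseteq B\subseteq F$ then $\pi(A)\subseteq\pi(B)$, so $\pi^*r_E(A)=r_E(\pi(A))\leq r_E(\pi(B))=\pi^*r_E(B)$ since $r_E$ is order preserving. Domination is equally direct, combining $|\pi(A)|\leq|A|$ with the fact that $r_E$ is dominated by cardinality: $\pi^*r_E(A)=r_E(\pi(A))\leq|\pi(A)|\leq|A|$.

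The substance of the argument is submodularity, and this is the one step where I expect to be careful. Using $\pi(A\cup B)=\pi(A)\cup\pi(B)$ and submodularity of $r_E$, I would obtain
\begin{equation*}
\pi^*r_E(A\cup B)=r_E\bigl(\pi(A)\cup\pi(B)\bigr)\leq r_E(\pi(A))+r_E(\pi(B))-r_E\bigl(\pi(A)\cap\pi(B)\bigr).
\end{equation*}
The target bound carries the term $-r_E(\pi(A\cap B))$ rather than $-r_E(\pi(A)\cap\pi(B))$, so the delicate point is that images need not commute with intersections. The inclusion $\pi(A\cap B)\subseteq\pi(A)\cap\pi(B)$, valid for any function, together with monotonicity of $r_E$, yields $r_E(\pi(A\cap B))\leq r_E(\pi(A)\cap\pi(B))$, which is exactly what is needed to pass from the displayed inequality to submodularity of $\pi^*r_E$. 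Theorem~\ref{thm:rank-vs-matroid} then certifies that $(F,\pi^*r_E)$ is a matroid.

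For the morphism claim I would feed the set map $\pi$ into the definition of a morphism of matroids. Although $\pi$ runs from $F$ to $E$ as a set map, the induced comparison of rank functions is recorded in the direction of the statement, $(E,r_E)\to(F,\pi^*r_E)$. The inequality to verify is $r_E(\pi(A))\leq\pi^*r_E(A)$ for every finite $A\subseteq F$, and this holds with equality by the very definition $\pi^*r_E(A)=r_E(\pi(A))$; hence $\pi$ is a morphism of matroids $(E,r_E)\to(F,\pi^*r_E)$ as claimed. No step beyond the intersection subtlety presents any obstacle.
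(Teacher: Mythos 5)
Your proof is correct, and since the paper explicitly leaves this lemma to the reader, your rank-function verification via Theorem~\ref{thm:rank-vs-matroid} (monotonicity, domination by cardinality, submodularity) is exactly the straightforward argument the authors intend, including the one genuinely delicate point: using $\pi(A\cap B)\subseteq\pi(A)\cap\pi(B)$ and monotonicity of $r_E$ to replace $-r_E(\pi(A)\cap\pi(B))$ by the larger term $-r_E(\pi(A\cap B))$, which you handle with the inequality in the correct direction. Your reading of the morphism claim --- verifying $r_E(\pi(A))\leq\pi^*r_E(A)$ for finite $A\subseteq F$, which holds with equality by definition --- is the only one consistent with the paper's definition of morphism for the set map $\pi\colon F\to E$, and you are right to flag that the domain and codomain in the lemma's statement are written in the opposite order.
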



\subsection{Simplicial complexes}

Given a space $X$, a simplicial complex $\mathscr{K}$ in $X$ is an order ideal in
$(2^X,\subseteq)$.  If $\mathscr{K}$ consists of finite sets we write
\[ \mathscr{K}_n = \{ x\in\mathscr{K}\mid |x|=n \} \] for every $n\in\mathbb{N}$.

\subsubsection{Clique complex}

Let $K$ be a graph.  The \emph{clique complex} of $K$ is a simplicial complex $\mathscr{L}$
such that if any set of vertices $\{x_0,\dots,x_n\}$ forms a \emph{clique}, i.e. when all
possible edges between these vertices are in $K$, then the simplex $[x_0,\dots,x_n]$ is in
$\mathscr{L}$.

\subsubsection{The nerve of a topological space}

Let $X$ be a topological space and let $\mathscr U = \{ U_i\subseteq X : i \in I \}$ be a
covering $X$ by open sets indexed by a set $I$.  The \emph{nerve} $N(\mathscr{U})$ of the
covering $\mathscr U$ is defined as the simplicial complex $C(\mathscr U)$ whose vertices
(i.e. 0-th skeleton) is $\mathscr U$ as a set.  For every $k\geq 1$, the ordered collection
$\left[U_{i_0},\dots,U_{i_k}\right]$ is going to be $k$-simplex if the intersection
$\bigcap_{j=0}^k U_{i_j}$ is
non-empty.  


The nerve of a topological space $X$, rather a suitable covering $\mathscr{U}$, is a useful
construction since one can recover the homotopy/homology type of $X$ from $N(\mathscr{U})$.

\begin{proposition}[\cite{Chazal_2021}]
  Let $X$ be a topological space with an open cover $\mathscr U = \{ U_i : i \in I
  \}$. Assume that the intersection of elements of any subset of $\mathscr U$ is empty or
  contractible. Then, the space $X$ and its nerve $N(\mathscr U)$ are homotopy equivalent.
\end{proposition}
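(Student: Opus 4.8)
The plan is to realize both $X$ and the nerve $N(\mathscr{U})$ as homotopy colimits of one and the same diagram and then invoke homotopy invariance of the homotopy colimit. Let $\mathscr{P}$ be the poset of nonempty finite intersections $U_\sigma := \bigcap_{i \in \sigma} U_i$, taken over finite $\sigma \subseteq I$ with $U_\sigma \neq \emptyset$ and suitably ordered; by the very definition of the nerve, $\sigma$ spans a simplex precisely when $U_\sigma \neq \emptyset$, so $\mathscr{P}$ is exactly the face poset of $N(\mathscr{U})$. The assignment $\sigma \mapsto U_\sigma$, with structure maps induced by the inclusions $U_\sigma \subseteq U_\tau$ for faces $\tau \subseteq \sigma$, defines a diagram $D \colon \mathscr{P} \to \mathbf{Top}$.

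First I would show $X \simeq \operatorname{hocolim}_{\mathscr{P}} D$. Choosing a partition of unity $\{\varphi_i\}$ subordinate to the open cover, the Čech / blow-up construction over this diagram admits a natural map $\operatorname{hocolim}_{\mathscr{P}} D \to X$, and one checks it is a homotopy equivalence because its point-fibers are contractible: each $x$ sits in the geometric slice spanned by the support set $\{i : \varphi_i(x) > 0\}$, which is nonempty and convex. Next, replacing $D$ by the constant diagram $\ast \colon \mathscr{P} \to \mathbf{Top}$ at a point, the homotopy colimit $\operatorname{hocolim}_{\mathscr{P}} \ast$ is, by construction, the order complex of $\mathscr{P}$, i.e. the barycentric subdivision of $N(\mathscr{U})$, hence homeomorphic to $N(\mathscr{U})$.

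Finally, the good-cover hypothesis says exactly that every $U_\sigma$ is contractible, so the canonical natural transformation $D \Rightarrow \ast$ is an objectwise homotopy equivalence. The homotopy invariance (projection lemma) for homotopy colimits then yields $\operatorname{hocolim}_{\mathscr{P}} D \simeq \operatorname{hocolim}_{\mathscr{P}} \ast$, and chaining the three equivalences gives $X \simeq N(\mathscr{U})$.

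The main obstacle is the first identification $X \simeq \operatorname{hocolim}_{\mathscr{P}} D$: this is where the point-set hypotheses (paracompactness, existence of a subordinate partition of unity, numerability of the cover) genuinely enter, and it cannot be handled by formal category theory alone. If one prefers to bypass the homotopy colimit machinery, the classical route is available: use the partition of unity to define $f \colon X \to |N(\mathscr{U})|$ by $f(x) = \sum_i \varphi_i(x)\, v_i$ in barycentric coordinates — well defined because the support indices span a simplex — and then construct a homotopy inverse by induction over the skeleta of $N(\mathscr{U})$, using the contractibility of each $U_\sigma$ to extend maps and homotopies across each newly attached simplex via an acyclic-carrier argument. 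In both approaches the contractibility hypothesis is invoked exactly once, to compare each intersection with a point, while the bookkeeping of which intersections are nonempty is precisely the combinatorial data recorded by $N(\mathscr{U})$.
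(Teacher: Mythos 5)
The paper does not actually prove this proposition: it is quoted from the literature (the citation to Chazal--Oudot) and used as a black box, so there is no proof of record to compare yours against. Judged on its own merits, your argument is the standard modern proof of the Nerve Theorem: identify $X$ with the homotopy colimit of the \v Cech diagram $D\colon \sigma\mapsto U_\sigma$ over the face poset of the nerve, identify $\operatorname{hocolim}_{\mathscr{P}}\ast$ with the order complex of $\mathscr{P}$, i.e. the barycentric subdivision of $N(\mathscr{U})$, and pass between the two by homotopy invariance of homotopy colimits applied to the objectwise equivalence $D\Rightarrow\ast$ supplied by the good-cover hypothesis. This architecture is sound, and you are right to flag that the identification $X\simeq\operatorname{hocolim}_{\mathscr{P}} D$ is where point-set hypotheses enter: as stated in the paper (an arbitrary topological space, with no paracompactness or numerability assumption), the proposition cannot be established this way --- in that generality one only obtains a weak homotopy equivalence --- so your implicit strengthening of the hypotheses is a necessary repair of the statement, not a defect of your proof.

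One step, as written, is not a valid deduction: ``it is a homotopy equivalence because its point-fibers are contractible.'' Contractibility of all fibers of a map does not imply it is a homotopy equivalence, nor even a weak equivalence: collapsing the limiting arc of the Warsaw circle to a point has only points and one arc as fibers, yet the Warsaw circle is weakly contractible while $S^1$ is not. The correct use of the partition of unity $\{\varphi_i\}$ is to construct a map in the \emph{other} direction, sending $x$ to the point of the blow-up lying over $x$ with barycentric coordinates $(\varphi_i(x))_i$, and then to check that both composites are homotopic to identities by linear homotopies in the simplex coordinates; this is Segal's argument, and the convexity you invoke is precisely what makes those linear homotopies well defined. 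Your fallback route --- map $X\to |N(\mathscr{U})|$ via the partition of unity and build a homotopy inverse skeleton by skeleton, using contractibility of each $U_\sigma$ in an acyclic-carrier argument --- is also a complete classical proof. In short: right theorem, right strategy, one incorrectly justified step that the standard references repair.
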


\subsection{Filtered complexes}

Let $(P,\leq)$ be an indexing poset.  We call a collection
$( \mathscr{K}_\varepsilon)_{\varepsilon \in P} $ of simplicial sets indexed by $P$ as a
\emph{filtered simplicial complex over $P$} if for every comparable pair of element
$\varepsilon\leq \eta$ in $P$ we have a morphism of simplicial sets of the form
$\iota_{\varepsilon,\eta} \colon \mathscr{K}_\varepsilon\to \mathscr{K}_\eta$ such that
\[ \iota_{\eta,\nu}\circ \iota_{\varepsilon,\eta} = \iota_{\varepsilon,\nu} \] for every
$\varepsilon\leq \eta\leq \nu$ in $P$.  One can also define a filtered complex as a functor
$\mathscr{K}$ from the poset $P$ to the category of simplicial complexes.

\subsection{A zoo of filtered complexes}\label{subsect:zoo}

Now, let $(X,d)$ be a metric space and let $D\subseteq X$ be a point cloud in $X$.  Let us
use $B_\varepsilon(x)$ for the open ball of radius $\varepsilon$ centred at $x\in D$.

\subsubsection{\v Cech complex}

The \v Cech complex $\mathcal{C}_{\varepsilon}(D)$ associated with $D$ at a scale parameter
$\varepsilon$ is defined to be the nerve $N(\mathscr{U}_\varepsilon)$ of the covering
\[ \mathscr{U}_\varepsilon = \{ B_\varepsilon(x)\mid x\in D\} \]

\subsubsection{Vietoris-Rips complex}

The Vietoris-Rips complex $R_\varepsilon(D)$ of $D$ is defined to be the simplicial complex whose vertices are all points in $D$ that are at most $ \varepsilon $ apart.  In other words
\[ R_{\varepsilon}(D) = \{\sigma \subset D\mid d(x,y)\leq \varepsilon, \text{ for all }
  x,y\in \sigma\} \] The clique complex of a graph $K$ is an example of Vietoris-Rips
complex if we consider a graph as a metric space via the geodesic distance and set
$\varepsilon=1$.

\subsubsection{Delanuay complex}

The \emph{Voronoi region} $R(x)$ of a point $x \in D$ is defined as the points in $X$ that
are closest to $x$. Formally,
  \[ R(x) = \{ y \in X\mid x \in \argmin_{z\in D} d(z,y) \} \]
  The \emph{Delaunay complex} is the nerve of the covering $\mathscr R=\{R(x)\mid x\in D\}$
  of $D$ given by Voronoi
  regions. 
	

\subsubsection{Alpha complex}
  
Let $\varepsilon >0$. The \emph{restricted Voronoi region} of a point $x$ is the intersection of the Voronoi region $R(x)$ and the open ball $B_\varepsilon(x)$. The \emph{alpha complex} $A_\varepsilon$ is the nerve of the covering given by the restricted Voronoi regions 
\[ \mathscr{A}_\varepsilon = \{ B_\varepsilon(x) \cap R(x) \mid x \in D \} \]

The alpha complex grows with $\varepsilon$. For instance, $A_0 = \emptyset$ and if $\varepsilon$ is big enough $A_\varepsilon$ coincides with the Delaunay complex.  Moreover, unlike the Rips and the \v Cech complexes, the dimension of the alpha complex is restricted to the dimension of the space the points are embedded in given that the points are in general position. For example, the dimension of the alpha complex of a set of points in $\mathbb R^2$ cannot exceed 2 whenever none three points are collinear.





\subsection{The persistent homology}

Assume $\mathscr{K}$ is a simplicial complex.  Let $C_n(\mathscr{K})$ be the $k$-span of the
simplicies in $\mathscr{K}_n$ for every $n\geq 0$ where we define
$d_n = \sum_{i=0}^n (-1)^i \partial_i$.  Because of the simplicial identities we have
$d_{n-1}d_n=0$ and we define
\begin{equation}
  \label{eq:2}
  Z_n = ker(d_n),\quad B_n = im(d_{n+1}),\quad H_n(\mathscr{K}) = Z_n/B_n
\end{equation}
for every $n\geq 0$. The vector space $Z_n$ is \emph{the space of cycles}, $B_n$ is
\emph{the space of boundaries}, and $H_n(\mathscr{K})$ is \emph{the homology} of
$\mathscr{K}$.

In persistent homology, the simplicial complexes we have are filtered over the poset
$\B{R}_+$ with its natural order.  Then for a filtered complex
$(\mathscr{K}_\varepsilon)_{\varepsilon\in P}$, the $k$-th persistent homology of the
filtered complex is defined as
\[ \text{PH}_{k}(\mathscr K):= \{ H_k(\mathscr{K}_{\varepsilon}) \}_{\varepsilon\in P} \]
together with the collection of $k$-linear maps of the form
$ \psi^k_{\varepsilon,\eta}\colon H_k(\mathscr{K}_{\varepsilon})\rightarrow
H_k(\mathscr{K}_{\eta}) $ induced by the structure maps of the filtration
$ \iota_{\varepsilon,\eta}\colon \mathscr{K}_{\varepsilon} \to \mathscr{K}_{\eta} $ for all
$k\in\B{N}$ and $ \varepsilon\leq \eta$ in $\B{R}_+$.

\subsection{Bar codes of persistent homology}

Since persistent homology works with filtered complexes over $\B{R}_+$, for each cycle
$\gamma\in Z_k^\varepsilon$ there is an interval that records the \emph{life-time} of
$\gamma$, i.e. the interval on which $\gamma$ is non-trivial as $\varepsilon$ ranges from 0
to $\infty$.  We say $\gamma$ is \emph{born} at $\varepsilon = b$ when the homology class
$[\gamma]\in H_k(\mathscr{K}_b)$ is not in the image of $\psi^k_{\varepsilon,b}$ for every
$\varepsilon < b$.  Similarly, we say $\gamma$ \emph{dies} at $\varepsilon=d$ if
$\psi^k_{b,\varepsilon}([\gamma])=0$ for every $\varepsilon>d$.

\begin{figure}[ht]
  \centering \includegraphics[width=12.5cm]{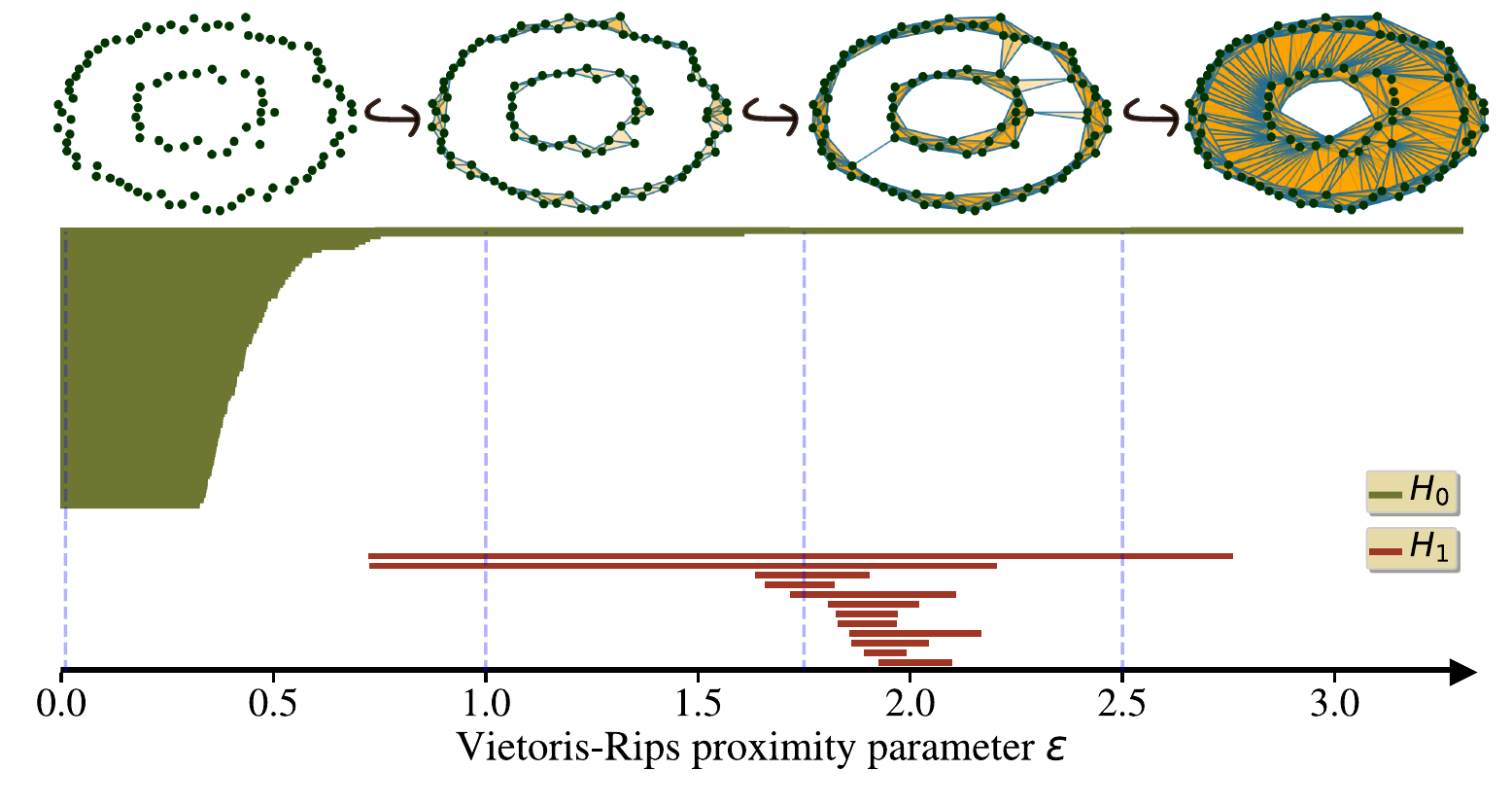}
  \caption{ An example barcode. } 
  \label{barcodeswithcomplex}
\end{figure}

To illustrate the life-times of cycles, we use \emph{barcodes} as introduced by Carlsson
et.al.~\cite{carlsson2005persistencebarcode} and Ghrist~\cite{ghrist2008barcodes}.  In a
barcode, we place the basis vectors for the homology on the vertical axis whereas the
horizontal axis represents the life span of each basis element in terms of the scale
parameter $ \varepsilon $. When we draw the vertical line at a particular $ \varepsilon_i$,
the number of intersecting line segments in a barcode is the dimension of the corresponding
homology group, i.e. the Betti number, for that parameter $\varepsilon_i$. See
Figure~\ref{barcodeswithcomplex}.

\section{Filtered Matroids and Rooted Forests}\label{sect:filteredMatroids}

\subsection{Irreducible sets in a matroid}\label{subsect:irreducible}

Assume $(E,r_E)$ is a matroid. We call a subset $A\subseteq E$ as \emph{irreducible} if $r_E(A) = |A|-1$ and for every proper subset $B$ of $A$ we have $r_E(B)=|B|$.

\begin{proposition}\label{prop:ramification}
  Given any finite subset $X\subseteq E$ with $r_E(X)<|X|$, there are irreducible subsets
  $A_1,\ldots,A_n$ such that $X = \bigcup_{i=1}^n A_i$.
\end{proposition}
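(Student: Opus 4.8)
The plan is to identify the irreducible subsets of $(E,r_E)$ with the \emph{circuits} of the matroid, i.e. the minimal dependent sets, and then to cover $X$ by fundamental circuits. First I would record the translation: a set $A$ is irreducible exactly when it is a minimal subset with $r_E(A)<|A|$. Indeed, if $A$ is irreducible then $A$ is dependent while every proper subset is independent, so $A$ is minimal dependent; conversely, if $A$ is minimal dependent then every proper subset $B$ satisfies $r_E(B)=|B|$, and taking $B=A\setminus\{a\}$ together with monotonicity of $r_E$ forces $|A|-1=r_E(B)\leq r_E(A)\leq |A|-1$, hence $r_E(A)=|A|-1$. This reduces the proposition to the assertion that a dependent set is covered by the irreducible sets it contains.

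Next I would choose a maximal independent subset $B\subseteq X$, so that $|B|=r_E(X)<|X|$ and in particular $X\setminus B$ is non-empty. For each $x\in X\setminus B$, maximality and monotonicity give $r_E(B\cup\{x\})=r_E(X)=|B|<|B\cup\{x\}|$, so $B\cup\{x\}$ is dependent and therefore contains an irreducible subset $C_x$; moreover $x\in C_x$, since any irreducible subset avoiding $x$ would sit inside the independent set $B$. Checking that $C_x$ is genuinely irreducible, and is in fact the unique \emph{fundamental circuit} of $x$ with respect to $B$, is routine from the exchange property. At this stage the union $\bigcup_{x\in X\setminus B} C_x$ already contains every element of $X\setminus B$.

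The main obstacle is the remaining containment: one still has to account for the elements of the basis $B$. The fundamental circuits $C_x$ cover $X\setminus B$ together with whatever basis elements happen to appear in some $C_x$, but a priori a basis element $b\in B$ may lie in none of them. Such a $b$ is precisely a \emph{coloop} of the restriction of the matroid to $X$, equivalently an element with $r_E(X\setminus\{b\})=r_E(X)-1$, and it cannot be placed inside any irreducible subset contained in $X$. So the delicate heart of the argument is to show that no such element survives, i.e. that the restricted matroid on $X$ has no coloops, so that $\bigcup_x C_x$ exhausts $B$ as well. I would try to extract this coloop-freeness from the global hypothesis $r_E(X)<|X|$ together with a careful choice of $B$; if it cannot be secured for a completely arbitrary matroid, the natural repair is to restrict the conclusion to the coloop-free part of $X$ (equivalently, to add the hypothesis that every element of $X$ participates in some dependency), which is the form in which the statement is actually used later.
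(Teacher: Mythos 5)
Your refusal to paper over the final containment is the right call: the proposition as stated is false, and the coloop obstruction you isolated is exactly what fails. Concretely, take the linear matroid on $E=\{a,b,c\}\subset\mathbb{R}^2$ with $a=(1,0)$, $b=(2,0)$, $c=(0,1)$, the rank of a subset being the dimension of its linear span, and let $X=E$. Then $r_E(X)=2<3=|X|$, but the only irreducible subset of $X$ is $\{a,b\}$: the singletons and the pairs $\{a,c\}$, $\{b,c\}$ are independent, and $X$ itself contains the dependent proper subset $\{a,b\}$, so it is not irreducible. Since any decomposition $X=\bigcup_i A_i$ forces each $A_i\subseteq X$, no union of irreducible sets can recover $c$; as you observed, $c$ is a coloop of the restriction to $X$ and lies in no circuit. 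So the two steps you did carry out (irreducible $=$ minimal dependent, and the covering of $X\setminus B$ by fundamental circuits) are correct, and the step you could not carry out is genuinely impossible.

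The paper's own proof breaks at precisely the point you flagged. It argues by induction on $|X|$: pick $x\in X$, form the family $\mathscr{U}$ of irreducible subsets of $X$ containing $x$, assert that $\mathscr{U}$ is non-empty, pick a maximal $Y\in\mathscr{U}$, delete $x$, and recurse. In the example above, taking $x=c$ makes $\mathscr{U}$ empty, so the unjustified non-emptiness assertion is exactly the coloop issue in disguise; and taking $x=a$ makes the recursion land on $\{b,c\}$, which is independent, so neither the hypothesis nor the conclusion of the inductive statement applies to it and $c$ is never covered. Your proposed repair is the correct and standard one: $X$ is a union of circuits if and only if the restriction $M|X$ has no coloops, equivalently $r_E(X\setminus\{x\})=r_E(X)$ for every $x\in X$. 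Under that hypothesis your fundamental-circuit argument closes, most cleanly by choosing, for each $x\in X$, a maximal independent subset $B_x$ of $X\setminus\{x\}$; coloop-freeness makes $B_x$ maximal independent in $X$, so $B_x\cup\{x\}$ is dependent and its fundamental circuit through $x$ lies in $X$. Moreover, the added hypothesis does hold where the paper actually invokes the proposition (decomposing the image of a ramified irreducible set in the cophenetic matroid, where the structure maps are inclusions on cycles): such an image carries a linear dependence with full support modulo boundaries, inherited from the circuit upstairs, and a full-support dependence rules out coloops. So the corrected statement you arrived at is exactly the one the paper needs.
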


\begin{proof}
  We give the proof on induction on the size of $X$. For $|X|=1$, $X$ is already a
  irreducible set and the statement is obviously true.  So, let us assume the statement
  holds for every $k\leq n$ and let $|X| = n+1$ with $r_E(X) \leq n$.  Take any element
  $x\in X$ and consider the set $\mathscr{U}$ of all subsets $A\subseteq X$ such that
  $x\in A$ and $A$ is irreducible.  Since $\mathscr{U}$ is a non-empty finite poset,
  there are maximal elements. Let $Y\in\mathscr{U}$ be such a maximal set. If it is already
  $Y=X$ one can stop.  Otherwise, we remove $x$ from $X$ and proceed by induction.
\end{proof}

\subsection{Filtered Matroids}

Let $P$ be an indexing poset.  A filtered matroid $(M_\varepsilon)_{\varepsilon\in P}$ is a set of pairs $(E_\varepsilon, r_\varepsilon\colon 2^{E_\varepsilon}\to \B{N})$ indexed by $P$ where $E_\varepsilon$ is a set and $r_\varepsilon$ is a rank function.  We must also have functions $\psi_{\varepsilon,\eta}\colon E_\varepsilon \to E_\eta$ that satisfy the conditions 
\[ \psi_{\eta,\nu}\circ \psi_{\varepsilon,\eta} = \psi_{\varepsilon,\nu} \qquad
   r_\eta(\psi_{\varepsilon,\eta}(A)) \leq r_\varepsilon(A)
\] for every finite set $A\subseteq E_\epsilon$ and for every $\varepsilon\leq \eta\leq \nu$ in $P$.

Here is another interpretation: Let us view $P$ as a category such that there is a unique morphism $x\to y$ whenever $x\leq y$ in $P$.  Then a filtered matroid is a functor from $P$ into the category of matroids.

\subsection{Ramification of irreducible sets}

Let us assume $P$ is an indexing poset and let $(E_\varepsilon,r_\varepsilon)$ be a filtered
matroid over $P$ with structure maps $\psi_{\eta,\varepsilon}\colon E_\varepsilon\to E_\eta$
for every $\varepsilon\leq\eta$ in $P$.  A irreducible set $A\subseteq E_\varepsilon$ is
said to be \emph{ramified} at $\eta>\varepsilon$ if
$r_\eta(\psi_{\eta,\varepsilon}(A)) < r_\epsilon(A)$.

\begin{theorem}
  Assume $(E_\varepsilon,r_\varepsilon,\psi_{\eta,\varepsilon})$ is a filtered matroid over
  the poset $\mathbb{R}_+$.  For every $\varepsilon$ and for every irreducible subset $A$ of
  $E_\varepsilon$, one can write the ramification information as a finite rooted tree whose
  edges are labeled by irreducible sets.
\end{theorem}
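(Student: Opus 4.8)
The plan is to construct the tree recursively, using the scale parameter as a clock: the root records the irreducible set $A$ at the level $\varepsilon$, and new vertices are created precisely at the values of $\eta$ where the set currently being tracked ramifies. First I would control the rank profile. For the fixed irreducible set $A\subseteq E_\varepsilon$ put $\rho(\eta)=r_\eta(\psi_{\eta,\varepsilon}(A))$ for $\eta\geq\varepsilon$. The cocycle identity $\psi_{\nu,\eta}\circ\psi_{\eta,\varepsilon}=\psi_{\nu,\varepsilon}$ together with the rank-decreasing condition $r_\nu(\psi_{\nu,\eta}(B))\leq r_\eta(B)$ shows that $\rho$ is non-increasing and $\B{N}$-valued with $\rho(\varepsilon)=|A|-1$. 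Hence $\rho$ assumes at most $|A|$ distinct values and has only finitely many points of strict decrease; the smallest one, call it $\eta_1$, is the first ramification of $A$.

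The branching step is where Proposition~\ref{prop:ramification} enters. At $\eta_1$ the image $B:=\psi_{\eta_1,\varepsilon}(A)$ satisfies $r_{\eta_1}(B)=\rho(\eta_1)<|A|-1$. When $B$ is \emph{dependent}, i.e. $r_{\eta_1}(B)<|B|$, Proposition~\ref{prop:ramification} writes $B=A_1\cup\cdots\cup A_k$ as a union of irreducible subsets of $E_{\eta_1}$. I would attach $k$ children to the root and label the corresponding edges by $A_1,\dots,A_k$; each $A_i$ is then viewed as a fresh irreducible set living at level $\eta_1$, and the construction recurses on the pair $(\eta_1,A_i)$. Thus every edge of the tree carries an irreducible set, as required, and a root-to-leaf path is exactly the genealogy of $A$ under ramification.

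For finiteness I would show that the cardinality strictly drops from a parent edge to each of its children. Each $A_i$ in the decomposition of $B$ is either a proper subset of $B$, whence $|A_i|<|B|\leq|A|$, or equal to $B$; in the latter case $B$ is itself irreducible with $r_{\eta_1}(B)=|B|-1$, and ramification forces $|B|-1=r_{\eta_1}(B)<|A|-1$, again giving $|B|<|A|$. In either case $|A_i|<|A|$, so cardinalities strictly decrease along every branch and the depth of the tree is at most $|A|$. Since each decomposition into irreducibles is a finite union, the branching is finite, and a finitely branching tree of bounded depth is finite.

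The delicate point, and the step I expect to be the main obstacle, is the possibility that the image $B$ at a ramification value is \emph{independent} while its rank has nonetheless dropped below $|A|-1$. This can occur only when $\psi_{\eta_1,\varepsilon}$ is non-injective on $A$ and the collisions collapse the single circuit of $A$ altogether, so that $B$ carries no irreducible subset and Proposition~\ref{prop:ramification} does not apply. The clean resolution is to declare such a node a leaf: once every dependency has dissolved there is no further irreducible structure to track, and terminating the branch is consistent with the convention that edges are labeled by irreducible sets. I would therefore set up the recursion so that it branches via Proposition~\ref{prop:ramification} whenever $B$ is dependent and halts whenever $B$ is independent, and the main work is to verify carefully that this dichotomy exhausts all ramifications and that, combined with the cardinality estimate above, the process always terminates in a finite rooted tree.
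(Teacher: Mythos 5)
Your proposal is correct and follows essentially the same route as the paper's much terser proof: detect ramification through the non-increasing, integer-valued rank of $\psi_{\eta,\varepsilon}(A)$, decompose the ramified image into irreducibles via Proposition~\ref{prop:ramification}, and deduce finiteness of the tree from the finiteness of $A$ (your strict cardinality-decrease argument makes this last step explicit). In fact you are more careful than the paper itself: the paper invokes Proposition~\ref{prop:ramification} unconditionally, silently assuming the image is dependent, whereas you correctly isolate the case where a non-injective structure map makes $\psi_{\eta,\varepsilon}(A)$ independent even though its rank has dropped --- a situation the proposition does not cover --- and resolve it by terminating the branch.
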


\begin{proof}
  Assume $A$ ramified at $\eta>\varepsilon$, i.e.
  $r_\eta(\psi_{\eta,\varepsilon}(A))\leq r_\varepsilon(A) = |A|-1$.  Then by
  Proposition~\ref{prop:ramification}, we can write $\psi_{\eta,\varepsilon}(A)$ as a union
  of irreducible sets.  Since $A$ is finite, $A$ can only ramify finitely many times.
\end{proof}

The rooted tree of a irreducible set $A$ is going to be called the \emph{ramification tree}
or the \emph{ramification dendrogram} of the irreducible set $A$.

\subsection{An example}\label{subsection:Example1}

For every $\varepsilon\in[0,\infty)$ let us define $s_\varepsilon\colon \mathbb{R}^n\to \mathbb{R}^n$ as
\[ s_\varepsilon(x_1,\ldots,x_n) =
  \begin{cases}
    (x_1,\ldots,x_n) & \text{ if } 0\leq \varepsilon<1\\
    (0,\ldots,0,x_i,\ldots,x_n) & \text{ if } i\leq \varepsilon < i+1\\
    (0,\ldots,0) & \text{ if } \varepsilon\geq n+1
  \end{cases}
\]
Let $\mathcal{F}_n$ be the set of all finite subsets of $\mathbb{R}^n$ and define
$r_\varepsilon\colon\mathcal{F}_n\to\mathbb{N}$ by
\begin{equation}
  \label{eq:dimension}
   r_\varepsilon(A) = \dim s_\varepsilon(A)
\end{equation}
One can check that this is a filtered matroid. Consider
\[ A = \{(1,1,1,1),(1,1,2,2),(1,2,3,3),(3,5,6,6)\} \] where we have $r_0(A)=3$ and every
subset of $A$ has rank $3$ which means $A$ is irreducible. But
\[ s_1(A) = \{(0,1,1,1), (0,1,2,2), (0,2,3,3), (0,5,6,6) \} \] has rank 2, and therefore, is
not irreducible. We can write $s_1(A)$ as a union of irreducible sets of maximal rank $2$
\[ s_1(A) = s_1(A_{1})\cup s_1(A_{2}) \] where
\[ A_{1}= \{(1,1,1,1), (1,1,2,2), (1,2,3,3)\} \qquad A_{2}=\{ (1,1,2,2), (1,2,3,3),
  (1,5,6,6) \} \] These irreducible sets further reduce at $\varepsilon=2$ and we split
\[ s_2(A_{1}) = s_2(A_{11})\cup s_2(A_{12})\qquad s_2(A_{2}) = s_2(A_{12})\cup s_2(A_{22})\]
where
\[ A_{11} = \{(1,1,1,1),(1,1,2,2)\} \quad A_{12} = \{ (1,1,2,2), (1,2,3,3)\} \quad A_{22} =
  \{(1,2,3,3),(3,5,6,6)\} \] and each set has rank 1. These sets preserve their ranks until
$\varepsilon=4$ and after $\varepsilon\geq 4$ all subset reduce to $0$.  Thus we can write
the tree as shown in Figure~\ref{fig:rootedTree1}.

\begin{figure}[h]
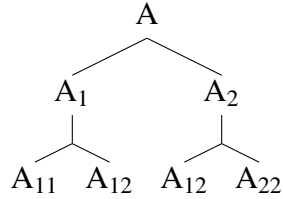

  \centering
  \Tree[.A [.A_{1}  [. [.A_{11} ]
                     [.A_{12} ]]]
         [.A_{2}  [. [.A_{12} ]
                     [.A_{22} ]]]]
\caption{The rooted tree representation of the matroid given in Subsection~\ref{subsection:Example1}.}
\label{fig:rootedTree1}
\end{figure}


\section{Persistent Homology and Filtered Matroids}\label{sect:copheneticMatroid}

\subsection{Multi-dimensional persistence and the \emph{no-go} theorem of Bauer et.al.}

Let $n\geq 1$ and let us consider the poset
\[ \B{R}_+^n = \{ (x_1,\ldots,x_n)\in\B{R}^n\mid x_i\geq 0, i=1,\ldots,n\} \] together with
the partial ordering $(x_1,\ldots,x_n)\preceq (y_1,\ldots,y_n)$ if $x_i\leq y_i$ for every
$1\leq i\leq n$. Given a filtered simplicial complex $\mathscr{K}_{\mathbf{\varepsilon}}$
over $\B{R}_+^n$, one may try to construct barcodes similar to the ordinary bar codes
of~\cite{carlsson2005persistencebarcode,ghrist2008barcodes}. Barcodes are complete
invariants due to the fact that representation theory of the poset $\B{R}_+$ is rather
simple. However, no such simple representations exist for filtered complexes over
$\B{R}_+^n$ since the representation theory of the poset $\B{R}_+^n$ and its discretization
$\B{N}^n$ are both wild for $n\geq 2$ by~\cite{BauerEtAl20}.

\subsection{Carlsson-Zomorodian rank function}

The \emph{no-go} result of~\cite{BauerEtAl20} forces us to come up with new representations
to depict evolutions of persistent homology classes over a scale parameter. One such example
is by Carlsson and Zomorodian~\cite{CZ09}.

Assume $M_\varepsilon$ is a $\B{R}_+^n$-filtered vector space where we assume
$\dim_{\B{R}}(M_\varepsilon)$ is finite for every $\varepsilon\in \B{R}_+^n$. In other
words, we have finite dimensional vector spaces $M_\varepsilon$ for each
$\varepsilon\in\B{R}_+^n$ together with structure maps
$\psi_{\varepsilon,\eta}\colon M_\varepsilon\to M_\eta$ for every $\varepsilon\preceq \eta$.
Then the Carlsson-Zomorodian rank function of $M$ is defined to be
\[ \rho(\varepsilon,\eta):= \dim_{\B{R}} \psi_{\varepsilon,\eta}(M_\varepsilon) \]
for every $\varepsilon\preceq\eta\in\B{R}_+^n$~\cite[Definition 6]{CZ09}.

\subsection{Carlsson-Zomorodian matroid}

There is a finer invariant than Carlsson-Zomorodian rank function given by a filtered
matroid. 
\begin{proposition}
  Given any filtered finite dimensional vector spaces $(M_\varepsilon)$, the function
  $r_\varepsilon$ defined as
  \begin{equation}
    \label{eq:CZMatroid}
    r_\varepsilon(A) = \dim_{\B{R}} Span_{\B{R}}(A)  
  \end{equation}
  for every finite subset $A$ of $M_\varepsilon$ yields a filtered matroid.
\end{proposition}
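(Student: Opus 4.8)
The plan is to verify the two layers of structure separately: first that for each fixed $\varepsilon$ the function $r_\varepsilon$ is a genuine matroid rank function, and then that the linear structure maps of the filtered vector space assemble these matroids into a filtered matroid in the sense defined above.

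For the first layer I would appeal to Theorem~\ref{thm:rank-vs-matroid}, so that it suffices to check that $r_\varepsilon$ is an order-preserving map dominated by the cardinality function and that it is submodular. Order preservation is immediate, since $A\subseteq B$ forces $Span_{\B{R}}(A)\subseteq Span_{\B{R}}(B)$ and hence $\dim Span_{\B{R}}(A)\leq \dim Span_{\B{R}}(B)$; domination by cardinality is the elementary fact that a set of $|A|$ vectors spans a space of dimension at most $|A|$. The only substantive point is submodularity, and here I would invoke the subspace dimension formula. Writing $U=Span_{\B{R}}(A)$ and $W=Span_{\B{R}}(B)$, one has $Span_{\B{R}}(A\cup B)=U+W$, so that $r_\varepsilon(A\cup B)=\dim(U+W)=\dim U+\dim W-\dim(U\cap W)$. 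Since every vector lying in the span of $A\cap B$ lies in both $U$ and $W$, we get the containment $Span_{\B{R}}(A\cap B)\subseteq U\cap W$, whence $r_\varepsilon(A\cap B)\leq \dim(U\cap W)$. Combining these two facts yields $r_\varepsilon(A\cup B)\leq r_\varepsilon(A)+r_\varepsilon(B)-r_\varepsilon(A\cap B)$, which is exactly the inequality~\eqref{semimodular}.

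For the second layer the structure maps are inherited directly from the filtered vector space: each $\psi_{\varepsilon,\eta}\colon M_\varepsilon\to M_\eta$ is already a set map on the underlying sets, and the identity $\psi_{\eta,\nu}\circ\psi_{\varepsilon,\eta}=\psi_{\varepsilon,\nu}$ holds because it holds for the linear maps themselves. It then remains to verify the rank-decreasing inequality $r_\eta(\psi_{\varepsilon,\eta}(A))\leq r_\varepsilon(A)$ for every finite $A\subseteq M_\varepsilon$. This follows from linearity of $\psi_{\varepsilon,\eta}$: since $Span_{\B{R}}(\psi_{\varepsilon,\eta}(A))=\psi_{\varepsilon,\eta}(Span_{\B{R}}(A))$, the image is a linear image of an $r_\varepsilon(A)$-dimensional space and hence has dimension at most $r_\varepsilon(A)$.

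The routine parts here are genuinely routine; the one place that deserves care is the submodularity step, precisely because the containment $Span_{\B{R}}(A\cap B)\subseteq U\cap W$ can be strict. This is exactly what prevents $r_\varepsilon$ from being modular and forces us into the submodular (matroid) regime, so it is the inequality, rather than equality, that is the crux of the statement. A secondary technical subtlety is that the ground set $M_\varepsilon$ is infinite as a set even though it is finite-dimensional as a vector space, so Theorem~\ref{thm:rank-vs-matroid}, which is stated for finite ground sets, should be applied to finite subsets; since $r_\varepsilon$ is only ever evaluated on finite subsets, this causes no real difficulty.
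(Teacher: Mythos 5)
Your proof is correct and follows essentially the same route as the paper's: verify that $r_\varepsilon$ satisfies the rank-function axioms of Theorem~\ref{thm:rank-vs-matroid} and that the linear structure maps give the compatibility inequality $r_\eta(\psi_{\varepsilon,\eta}(A))\leq r_\varepsilon(A)$. The paper's own proof is much terser --- it only records the cardinality bound and the connecting-map inequality, leaving the submodularity of $\dim Span_{\B{R}}(\cdot)$ implicit as the standard linear-matroid fact --- so your explicit verification via the dimension formula, and your remark about applying the finite-ground-set theorem only to finite subsets, simply fill in details the paper takes for granted.
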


\begin{proof}
  The  function $r_\varepsilon$ is dominated by the cardinality function, and it satisfies
  \[ \dim_{\B{R}} \psi_{\varepsilon,\eta}(Span_{\B{R}}(A)) \leq \dim_{\B{R}}Span_{\B{R}}(A)  \]
  for every $\varepsilon\preceq \eta$, and thus, the collection $(r_\varepsilon)_{\varepsilon\in\B{R}_+^n}$ is a filtered matroid.
\end{proof}

\subsection{Cophenetic matroid}

From this point onward, we work with the poset $\B{R}_+$, and a filtered simplicial complex
$(\mathscr{K}_\varepsilon)_{\varepsilon\in\B{R}_+}$ such that structure morphisms are
inclusions.  Recall that for this filtered complex we have cycles
$Z_k^\varepsilon:= ker(d_k^\varepsilon)$ and boundaries
$B_k^\varepsilon:=im(d_{k+1}^\varepsilon)$ and homology groups
$H_k(\mathscr{K}_\varepsilon) := Z_k^\varepsilon/B_k^\varepsilon$ for every $k\in\B{N}$ and
$\varepsilon \in\B{R}_+$.  We also have connecting linear maps
$\psi^k_{\varepsilon,\eta}\colon Z_k^\varepsilon\to Z_k^\eta$ and
$\psi^k_{\varepsilon,\eta}\colon B_k^\varepsilon\to B_k^\eta$ for every pair
$\varepsilon\leq \eta$ and for every $k\in\B{N}$.  Note that since
$\mathscr{K}_{\varepsilon}\subseteq \mathscr{K}_{\eta}$ for every $\varepsilon\leq \eta$,
the induced maps $\psi^k_{\varepsilon,\eta}$ on cycles and boundaries are also
monomorphisms.  However, even if this is the case, the induced maps in homology need not be
monomorphisms.

Let us write $F_k^\varepsilon$ for the set of all finite subsets of $Z_k^\varepsilon$.  For
every $A\in F_k^\varepsilon$ define
\begin{align}\label{eq:cophenetic-rank}
  c^k_\varepsilon(A)
  = & \dim (Span_{\B{R}}(A) + B_k^\varepsilon) - \dim B_k^\varepsilon \\
  = & \dim(Span_{\B{R}}(A)) - \dim(Span_{\B{R}}(A)\cap B_k^\varepsilon)
\end{align}
Notice that $c^k_\varepsilon$ is a poset map and is dominated by the cardinality function
and we have
\begin{equation}\label{eq:connections}
  c^k_\eta (\psi^k_{\varepsilon,\eta}(A)) \leq c^k_\varepsilon(A)
\end{equation}
for every $\eta\geq \varepsilon$ and $A\in F_k^\varepsilon$.  The function $c^k_\varepsilon$
is called the \emph{cophenetic rank function} of the filtered complex
$\mathscr{K}_\varepsilon$.

\begin{theorem}\label{thm:cophenetic-sbmodularity}
  The cophenetic rank function $c^k_\varepsilon\colon F_k^\varepsilon\to \B{N}$ is
  submodular for every $\varepsilon\in\B{R}_+$ and for every $k\in\B{N}$.  Thus by
  Theorem~\ref{thm:rank-vs-matroid} for every $k\geq 0$ there is a filtered matroid
  $(M^k_\varepsilon)_{\varepsilon\in\B{R}_+}$ of the filtered simplicial complex
  $(\mathscr{K}_\varepsilon)_{\varepsilon\in\B{R}_+}$.
\end{theorem}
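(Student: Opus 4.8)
The plan is to recognize the cophenetic rank function as the pullback of an ordinary linear matroid along the quotient map onto homology, so that submodularity becomes a formal consequence of Lemma~\ref{lem:induced-matroid} rather than a hands-on computation. Let $q\colon Z_k^\varepsilon\to H_k(\mathscr{K}_\varepsilon)=Z_k^\varepsilon/B_k^\varepsilon$ denote the canonical projection. Since $q$ is linear we have $q(Span_{\B{R}}(A))=Span_{\B{R}}(q(A))$, and the first line of \eqref{eq:cophenetic-rank} computes exactly the dimension of this image, namely $\dim(Span_{\B{R}}(A)+B_k^\varepsilon)-\dim B_k^\varepsilon=\dim_{\B{R}}(Span_{\B{R}}(A)+B_k^\varepsilon)/B_k^\varepsilon$. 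This yields the identity
\[ c^k_\varepsilon(A)=\dim_{\B{R}} Span_{\B{R}}(q(A)) \]
for every finite $A\subseteq Z_k^\varepsilon$. In other words, writing $r$ for the linear matroid rank function $S\mapsto \dim_{\B{R}} Span_{\B{R}}(S)$ on finite subsets of $H_k(\mathscr{K}_\varepsilon)$, we have $c^k_\varepsilon=q^*r$ in the notation of the induced-matroid construction.

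First I would record that $r$ is itself a matroid rank function: it is dominated by the cardinality function and submodular, exactly as in the construction of the Carlsson-Zomorodian matroid (this is the standard fact that representable matroids are matroids). Hence $(H_k(\mathscr{K}_\varepsilon),r)$ is a matroid and $r$ is submodular by Theorem~\ref{thm:rank-vs-matroid}. Then I would apply Lemma~\ref{lem:induced-matroid} with $\pi=q$, $E=H_k(\mathscr{K}_\varepsilon)$ and $F=Z_k^\varepsilon$, which gives that $(F,q^*r)=(Z_k^\varepsilon,c^k_\varepsilon)$ is again a matroid; invoking Theorem~\ref{thm:rank-vs-matroid} a second time delivers the desired submodularity of $c^k_\varepsilon$.

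The one genuinely delicate point---and the reason the result is not simply the modularity of $\dim_{\B{R}}$---is that spans do not commute with intersections: in general one only has $Span_{\B{R}}(A\cap B)\subseteq Span_{\B{R}}(A)\cap Span_{\B{R}}(B)$, and likewise $q(A\cap B)\subseteq q(A)\cap q(B)$, with both inclusions possibly strict. I expect this to be the main obstacle were one to attempt a direct verification of \eqref{semimodular}. It is, however, precisely what the pullback argument absorbs: using $q(A\cup B)=q(A)\cup q(B)$ together with monotonicity of $r$, one obtains
\[ c^k_\varepsilon(A\cup B)+c^k_\varepsilon(A\cap B)\leq r\bigl(q(A)\cup q(B)\bigr)+r\bigl(q(A)\cap q(B)\bigr)\leq r(q(A))+r(q(B)), \]
where the first inequality uses $r(q(A\cap B))\leq r\bigl(q(A)\cap q(B)\bigr)$ and the second is submodularity of $r$ inside $H_k(\mathscr{K}_\varepsilon)$. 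This chain is exactly the content of Lemma~\ref{lem:induced-matroid}, so no work beyond citing it is required.

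Finally, to upgrade the pointwise statement to a filtered matroid I would note that the two defining conditions are already in hand: the functoriality $\psi_{\eta,\nu}\circ\psi_{\varepsilon,\eta}=\psi_{\varepsilon,\nu}$ is inherited from the inclusions of the filtered complex, while the rank-nonincreasing condition $c^k_\eta(\psi^k_{\varepsilon,\eta}(A))\leq c^k_\varepsilon(A)$ is exactly \eqref{eq:connections}. Hence the family $(M^k_\varepsilon)_{\varepsilon\in\B{R}_+}$ assembled from the matroids $(Z_k^\varepsilon,c^k_\varepsilon)$ is a filtered matroid for each $k\in\B{N}$.
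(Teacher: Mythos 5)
Your proposal is correct and takes essentially the same route as the paper: the paper's proof likewise identifies $c^k_\varepsilon(A)$ with the dimension of the span of the image of $A$ in the quotient $H_k(\mathscr{K}_\varepsilon)=Z_k^\varepsilon/B_k^\varepsilon$ and then invokes Lemma~\ref{lem:induced-matroid}. You merely spell out what the paper leaves implicit---the identity $Span_{\B{R}}(q(A)) = q(Span_{\B{R}}(A))$, the submodularity chain behind the induced-matroid lemma, and the verification of the filtered-structure conditions via \eqref{eq:connections}.
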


\begin{proof}
  Given a finite set $A$ in $Z_k^\varepsilon$ its cophenetic rank $c^k_\epsilon(A)$ is the
  dimension of $Span_{\B{R}}(A)$ in the quotient vector space
  $H_k(\mathscr{K}_\varepsilon) = Z_k^\varepsilon/B_k^\varepsilon$.  Now, apply
  Lemma~\ref{lem:induced-matroid}.
\end{proof}

The matroid $(M^k_\varepsilon)_{\varepsilon\in\B{R}_+}$ given in
Theorem~\ref{thm:cophenetic-sbmodularity} is called the \emph{$k$-th cophenetic matroid} of
a filtered simplicial complex $(\mathscr{K}_\varepsilon)_{\varepsilon\in\B{R}_+}$.

\subsection{An example}\label{example:cophenetic}
Consider the configuration of points given in Figure~\ref{fig:triangle}. Assume we put a
filtration where at $\varepsilon=0$ we have disjoint points, and at $\varepsilon=1$ the
smaller triangles $DEF$, $GHI$ and $JKL$ are formed. Then at $\varepsilon=2$ the large
triangle $ABC$ is formed, and at $\varepsilon=3$ we fill-in the region between the large
triangle $ABC$ and the three smaller triangles. Finally at $\varepsilon=4,5,6$ we fill-in
the smaller triangles $DEF$, $GIH$ and $JKL$ in order.

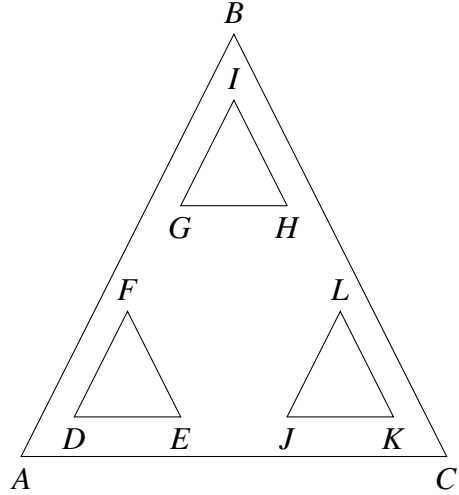
\begin{figure}
  \begin{tikzpicture}[scale=0.7]
    \draw (0,0.25) node[anchor=north]{$A$}
    -- (8,0.25) node[anchor=north]{$C$}
    -- (4,8.25) node[anchor=south]{$B$}
    -- cycle;
    \draw (1,1) node[anchor=north]{$D$}
    -- (3,1) node[anchor=north]{$E$}
    -- (2,3) node[anchor=south]{$F$}
    -- cycle;
    \draw (3,5) node[anchor=north]{$G$}
    -- (5,5) node[anchor=north]{$H$}
    -- (4,7) node[anchor=south]{$I$}
    -- cycle;
    \draw (5,1) node[anchor=north]{$J$}
    -- (7,1) node[anchor=north]{$K$}
    -- (6,3) node[anchor=south]{$L$}
    -- cycle;
  \end{tikzpicture}
  \caption{The configuration of points for
    Subsection~\ref{example:cophenetic}}\label{fig:triangle}
  \end{figure}

Consider the set of first homology classes $X=\{ ABC, DEF, GIH, JKL \}$ that forms as an
independent set at $\varepsilon=2$.  But at $\varepsilon=3$ when we fill-in the region
between $ABC$ and the smaller triangles they become linearly dependent, and we get an
irreducible set. As we kill the smaller triangles we get
\begin{align*}
  X & = \{ ABC, GIH, JKL \} \cup \{DEF\} \\
    & = \{ ABC, JKL\} \cup \{GIH\} \cup \{DEF\} \\
    & = \{ ABC \} \cup \{JKL\} \cup \{GIH\} \cup \{DEF\} 
\end{align*}
as unions of irreducible sets. We represent these splittings as a tree in
Figure~\ref{fig:rootedTree2}.
\begin{figure}[h]
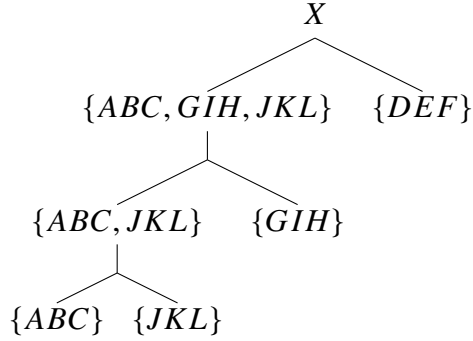

  \centering
  \Tree[.$X$ [.$\{ABC,GIH,JKL\}$ [. [.$\{ABC,JKL\}$ [. $\{ABC\}$ $\{JKL\}$ ]]
                                    [.$\{GIH\}$ ]]]
             [.$\{DEF\}$ ]]
             \caption{Tree representation of the cophenetic matroid of
               Example~\ref{example:cophenetic}}\label{fig:rootedTree2}
\end{figure}

\subsection{Cophenetic distance}

Now, for each pair of cycles $ \alpha$ and $ \beta $ in $ Z_{k}^\varepsilon $ representing
classes in $H_k(\mathscr{K}_\varepsilon)$, one can test the rank of the pair
$\{\alpha,\beta\}$ at every $\eta>\varepsilon$.  If the rank of the pair
$\{\psi^k_{\varepsilon,\eta}(\alpha),\psi^k_{\varepsilon,\eta}(\beta)\}$ is less than 2,
then we will say that the cycles $ \alpha $ and $ \beta $ \emph{merged} at time $ \eta $.
Thus we can define \emph{$k$-th homological cophenetic distance}
\begin{equation*}
\label{eq:1}
d_k(\alpha,\beta) =
\inf\left\{\eta-\varepsilon\geq 0 \mid c^k_\eta(\{\psi^k_{\varepsilon,\eta}(\alpha),\psi^k_{\varepsilon,\eta}(\beta)\}) < 2 \right\}. 
\end{equation*}
for every $\alpha,\beta\in H_k(\mathscr{K}_\varepsilon)$ and for every
$k\geq 0$.

\begin{proposition}[\cite{GuzelKaygun22}]
  The cophenetic distance $d_k$ on $H_k(\mathscr{K}_\varepsilon)$ is a non-archimedian metric for
  every $\varepsilon\geq 0$ and for every $k\geq 0$.
\end{proposition}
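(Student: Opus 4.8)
The plan is to check the three axioms of a non-archimedean (ultra)metric in turn, treating the strong triangle inequality as the only substantive step. Throughout I abbreviate $\alpha_\eta := \psi^k_{\varepsilon,\eta}(\alpha)$ and write $[\,\cdot\,]$ for classes in $H_k(\mathscr{K}_\eta)=Z_k^\eta/B_k^\eta$, so that $c^k_\eta(\{\alpha_\eta,\beta_\eta\})$ equals the dimension of the span of $[\alpha_\eta]$ and $[\beta_\eta]$ in $H_k(\mathscr{K}_\eta)$, by \eqref{eq:cophenetic-rank} and the proof of Theorem~\ref{thm:cophenetic-sbmodularity}. Symmetry is immediate since $\{\alpha_\eta,\beta_\eta\}=\{\beta_\eta,\alpha_\eta\}$, and non-negativity is built into the defining set $\{\eta-\varepsilon\geq 0:\dots\}$. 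For reflexivity, $c^k_\eta(\{\alpha_\eta,\alpha_\eta\})=c^k_\eta(\{\alpha_\eta\})\leq 1<2$ already at $\eta=\varepsilon$, so $d_k(\alpha,\alpha)=0$. The converse direction of the identity-of-indiscernibles axiom requires care: two proportional nonzero classes satisfy $c^k_\varepsilon(\{\alpha,\beta\})=1<2$ and hence are at distance $0$; so strictly $d_k$ is an ultrametric on the set of lines of $H_k(\mathscr{K}_\varepsilon)$ (equivalently a pseudometric on $H_k(\mathscr{K}_\varepsilon)\setminus\{0\}$, extended-valued when two classes never merge), and I would phrase indiscernibility in that projective form.

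The engine for the rest is a monotonicity lemma: the map $\eta\mapsto c^k_\eta(\{\alpha_\eta,\beta_\eta\})$ is non-increasing. This follows by applying the filtered-matroid inequality \eqref{eq:connections} to $\{\alpha_{\eta_0},\beta_{\eta_0}\}$ together with the functoriality $\psi^k_{\eta_0,\eta}\circ\psi^k_{\varepsilon,\eta_0}=\psi^k_{\varepsilon,\eta}$, giving $c^k_\eta(\{\alpha_\eta,\beta_\eta\})\leq c^k_{\eta_0}(\{\alpha_{\eta_0},\beta_{\eta_0}\})$ whenever $\eta_0\leq\eta$. As the function is $\B{N}$-valued, once it drops below $2$ it stays below $2$; hence the defining set $\{\eta-\varepsilon:c^k_\eta(\{\alpha_\eta,\beta_\eta\})<2\}$ is an up-set of the form $[d_k(\alpha,\beta),\infty)$ (or $(d_k(\alpha,\beta),\infty)$), so ``being merged'' is a monotone property of time. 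This is exactly the dendrogram-type nesting that makes a cophenetic distance an ultrametric.

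For the strong triangle inequality I would fix $t>\max\{d_k(\alpha,\beta),d_k(\beta,\gamma)\}$ and set $\eta=\varepsilon+t$. By the monotonicity lemma both pairs are merged at $\eta$, i.e. $[\alpha_\eta],[\beta_\eta]$ are linearly dependent and $[\beta_\eta],[\gamma_\eta]$ are linearly dependent in $H_k(\mathscr{K}_\eta)$. When $[\beta_\eta]\neq 0$ this forces $[\alpha_\eta],[\gamma_\eta]\in\mathrm{Span}\{[\beta_\eta]\}$, a one-dimensional space, so $c^k_\eta(\{\alpha_\eta,\gamma_\eta\})\leq 1<2$; thus $t$ lies in the defining set of $d_k(\alpha,\gamma)$, and taking the infimum over all such $t$ yields $d_k(\alpha,\gamma)\leq\max\{d_k(\alpha,\beta),d_k(\beta,\gamma)\}$.

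The hard part is precisely the degenerate case $[\beta_\eta]=0$, i.e. when the mediating class has died before $\alpha$ and $\gamma$ become dependent. Here transitivity of linear dependence genuinely fails — the zero class is dependent with every vector — and a literal reading of the condition $c^k_\eta<2$ would violate the ultrametric inequality. The resolution is to replace that condition by the equivalence relation it is meant to encode, namely ``$[\alpha_\eta]$ and $[\beta_\eta]$ span the same line in $H_k(\mathscr{K}_\eta)$, or both have died,'' which I expect is the relation actually used in \cite{GuzelKaygun22}; one then checks this relation is transitive and refines monotonically in $\eta$, after which the ultrametric inequality follows formally from the nesting of the associated partitions. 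I expect this bookkeeping around dying classes, rather than the linear algebra of the nondegenerate case, to be the main obstacle, and would make the projective/partition reformulation at the very outset so that the relation is transitive by construction.
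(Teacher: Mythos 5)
Your core argument is the same as the paper's, just run forwards instead of by contradiction: the paper supposes $d_k(\alpha,\beta)>\max(d_k(\alpha,\gamma),d_k(\gamma,\beta))$, finds a common earlier time $\mu$ at which $\alpha=a\gamma$ and $\beta=b\gamma$ with \emph{non-zero} scalars $a,b$, and concludes $b\alpha=a\beta$ in $H_k(\mathscr{K}_\mu)$ --- which is exactly your ``both $[\alpha_\eta]$ and $[\gamma_\eta]$ lie in $\mathrm{Span}\{[\beta_\eta]\}$'' step with the mediating class renamed. Two refinements in your write-up are genuinely better than what is printed. First, you state and prove the monotonicity lemma from \eqref{eq:connections}; the paper uses this tacitly when it passes from the two separate infima to a single time $\mu$ at which \emph{both} mediated pairs are dependent. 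Second, your worry about the degenerate case is not excessive caution: it is a real gap, and it sits precisely in the paper's phrase ``non-zero scalars,'' which silently assumes the mediating class has not died.

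In fact the paper's own example (Subsection~\ref{example:cophenetic}, Figure~\ref{fig:triangle}) violates the inequality as literally stated. Work at $\varepsilon=2$ and take $\alpha=ABC$, $\gamma=DEF$, $\beta=JKL$. When $DEF$ is filled in at $\varepsilon=4$ we get $[\gamma]=0$, so both $c^1_4(\{\alpha_4,\gamma_4\})$ and $c^1_4(\{\gamma_4,\beta_4\})$ drop below $2$, giving $d_1(\alpha,\gamma)=d_1(\gamma,\beta)=2$; but $[ABC]=[GIH]+[JKL]$ remains independent of $[JKL]$ until $GIH$ is filled at $\varepsilon=5$, so $d_1(\alpha,\beta)=3>\max(2,2)$. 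So the proposition holds only after the kind of reformulation you propose --- merging as an equivalence relation (``same line, or both dead''), i.e.\ the projective/partition form --- and the task you leave open, checking that this relation is transitive and nested in $\eta$, is exactly what restores the dendrogram argument. In short: your nondegenerate case reproduces the paper's proof; your degenerate case exposes a hole that the paper's proof does not close.
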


\begin{proof}
  Assume $\alpha,\beta,\gamma\in Z_k(\mathscr{K}_\varepsilon)$.  Assume, by way
  of contradiction, that
  \[ d_k(\alpha,\beta) > \max(d_k(\alpha,\gamma),
    d_k(\gamma,\beta)). \] This means there are indices $\eta>\mu$
  such that the pair $(\alpha,\beta)$ becomes linearly dependent in
  $H_k(\mathscr{K}_\eta)$ while the pairs $(\alpha,\gamma)$ and $(\gamma,\beta)$
  become linearly dependent at an earlier time in $H_k(\mathscr{K}_\mu)$.  Then
  there are non-zero scalars $a,b\in k$ such that
  \[ \alpha = a\gamma,\ \beta = b\gamma \text{ which implies } b\alpha
    = a\beta \] in $H_k(\mathscr{K}_\mu)$ which is a contradiction
  since $\alpha$ and $\beta$ become linearly dependent at a later time
  $\eta>\mu$.
\end{proof}

\section{Cobordisms}\label{sect:cobordism}

\subsection{Hurewicz map}

Assume that our data set $D$ is sampled from a manifold $M$ embedded in $\B{R}^n$.  Assume
also we created a filtered simplicial complex $(\mathscr{K}_\varepsilon)$ from $D$.  Since
we work with filtered complexes $(\mathscr{K}_\varepsilon)_{\varepsilon\in \B{R}_+}$, the
corresponding vector spaces of cycles $(Z_k^\varepsilon)_{\varepsilon\in \B{R}_+}$ and
boundaries $(B_k^\varepsilon)_{\varepsilon\in \B{R}_+}$ are also filtered.

First, we recall the following version of the rational Hurewicz Theorem:
\begin{proposition}[\cite{Dyer:RationalHomology, KlausKreck:RationalHurewicz}]\label{prop:hurewicz}
  Assume $M$ is a simply connected topological space with $\pi_n(M)=0$ for $1\leq n<r$.
  Then the rational Hurewicz map $\pi_n(M)\otimes\B{Q}\to H_n(M)\otimes\B{Q}$ is
  an isomorphism for $1\leq n< 2r-1$ and is a surjection for $n=2r-1$.
\end{proposition}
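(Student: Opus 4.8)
The plan is to prove the statement by the classical route, via the Serre spectral sequence of a fibration that kills the bottom homotopy group, inducting on connectivity. First I would dispose of the trivial part of the range. For $1\le n<r$ both $\pi_n(M)$ and $H_n(M;\B{Q})$ vanish (the latter by $(r-1)$-connectivity), so the Hurewicz map is trivially an isomorphism; and for $n=r$ the integral Hurewicz theorem gives $\pi_r(M)\cong H_r(M;\B{Z})$, whence applying the exact functor $-\otimes\B{Q}$ yields $\pi_r(M)\otimes\B{Q}\cong H_r(M;\B{Q})$, which naturality identifies with the rational Hurewicz map. All the content therefore lies in the range $r<n\le 2r-1$.

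Write $A=\pi_r(M)$ and choose $u\colon M\to K(A,r)$ inducing the identity on $\pi_r$; such a $u$ exists because $[M,K(A,r)]=H^r(M;A)\cong\mathrm{Hom}(H_r(M;\B{Z}),A)\cong\mathrm{Hom}(A,A)$, using $H_{r-1}(M;\B{Z})=0$ to kill the $\mathrm{Ext}$ term in universal coefficients. Converting $u$ to a fibration with homotopy fibre $F$, the long exact sequence gives $\pi_i(F)=0$ for $i\le r$ and $\pi_i(F)\cong\pi_i(M)$ for $i\ge r+1$, so $F$ is $r$-connected. I would then feed $F\to M\to K(A,r)$ into the rational Serre spectral sequence $E^2_{p,q}=H_p(K(A,r);\B{Q})\otimes H_q(F;\B{Q})\Rightarrow H_{p+q}(M;\B{Q})$. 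The crucial input is that $\tilde H_p(K(A,r);\B{Q})$ vanishes for $0<p<r$ and for $r<p<2r$, equalling $A\otimes\B{Q}$ at $p=r$: rationally $K(A,r)$ splits as a product of copies of $K(\B{Q},r)$, whose rational cohomology is polynomial (for $r$ even) or exterior (for $r$ odd) on a single degree-$r$ class, so nothing occurs strictly between degrees $r$ and $2r$.

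Combining this with the $r$-connectivity of $F$ (so $H_q(F;\B{Q})=0$ for $0<q\le r$), inspection of the diagonal $p+q=n$ for $r<n\le 2r-1$ shows the only possibly nonzero term is $E^2_{0,n}=H_n(F;\B{Q})$, every contribution with $p>0$ being annihilated either by $\tilde H_p(K(A,r);\B{Q})=0$ or by $H_q(F;\B{Q})=0$. For $n<2r-1$ no differential can reach this corner, so $H_n(M;\B{Q})\cong H_n(F;\B{Q})$; at $n=2r-1$ the only differential hitting $E_{0,2r-1}$ is $d^{2r}$ out of $E_{2r,0}=H_{2r}(K(A,r);\B{Q})$, so $H_{2r-1}(M;\B{Q})$ is a quotient of $H_{2r-1}(F;\B{Q})$. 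Since $F$ is $r$-connected it satisfies the hypotheses with $r+1$ in place of $r$, and the required degrees satisfy $n\le 2r-1<2(r+1)-1$; applying the statement to $F$ identifies $H_n(F;\B{Q})$ with $\pi_n(F)\otimes\B{Q}=\pi_n(M)\otimes\B{Q}$. Because passing to the fibre strictly raises connectivity while the degree $n$ is fixed, the recursion terminates once connectivity exceeds $n$, and naturality of the Hurewicz map along $F\to M$ together with the edge homomorphism guarantees the resulting isomorphism (resp. surjection at $n=2r-1$) is the Hurewicz map itself.

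The main obstacle is the spectral-sequence bookkeeping: pinning down the vanishing of $\tilde H_*(K(A,r);\B{Q})$ in the band $(r,2r)$ and ruling out every differential into the surviving corner except the single $d^{2r}$ responsible for the loss of injectivity exactly at $n=2r-1$. An attractive alternative that makes the bound entirely transparent is the minimal Sullivan model $(\Lambda V,d)$ of $M$, where $V\cong\mathrm{Hom}(\pi_*(M),\B{Q})$ begins in degree $r$, so the decomposables $\Lambda^{\ge 2}V$ begin in degree $2r$; hence for $n<2r-1$ both the incoming and outgoing differentials on $V^n$ vanish and $H^n(M;\B{Q})\cong V^n$, while at $n=2r-1$ only $d\colon V^{2r-1}\to(\Lambda^2V)^{2r}$ can be nonzero, giving $H^{2r-1}(M;\B{Q})\hookrightarrow V^{2r-1}$ and dually the surjectivity. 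Either route still requires verifying that the algebraically produced isomorphism coincides with the geometric Hurewicz map.
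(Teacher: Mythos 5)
The paper does not prove this proposition at all: it is quoted as a known result, with the proof deferred to the cited references (Dyer; Klaus--Kreck), so there is no internal argument to compare against. Your proposal is the standard proof of the rational Hurewicz theorem --- kill $\pi_r$ by a map $M\to K(\pi_r(M),r)$, run the rational Serre spectral sequence of the fibre sequence $F\to M\to K(\pi_r(M),r)$, use the vanishing of $\widetilde{H}_p(K(A,r);\B{Q})$ in the band $r<p<2r$, and induct on connectivity with the degree $n$ fixed --- and this is essentially the argument in the cited Klaus--Kreck paper, so you have in effect reconstructed the proof the authors are pointing to. The outline is correct: the trivial range $n\leq r$, the construction of $u$ via universal coefficients, the $r$-connectivity of $F$, the identification of the only surviving corner $E^2_{0,n}=H_n(F;\B{Q})$ on the diagonal, the single possibly nonzero differential $d^{2r}\colon E^{2r}_{2r,0}\to E^{2r}_{0,2r-1}$ accounting for the loss of injectivity exactly at $n=2r-1$, and the termination of the recursion once connectivity exceeds $n$ all check out. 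Two small points you would need to nail down in a complete write-up: first, $\pi_r(M)$ need not be finitely generated, so the computation of $H_*(K(A,r);\B{Q})$ should be handled by reducing to $K(A\otimes\B{Q},r)$ and passing to a filtered colimit of finite products of $K(\B{Q},r)$'s (homology commutes with filtered colimits); second, the Sullivan-model alternative you mention requires finite-type or nilpotence hypotheses that the stated proposition does not impose, so the spectral-sequence route is the one that proves the theorem in the stated generality. Neither issue affects the validity of your main argument.
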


We use Proposition~\ref{prop:hurewicz} as follows.
\begin{proposition}\label{prop:useful}
  Let us assume $\pi_n(C)=0$ for each connected component $C$ of $M$ for all $0\leq n<r$ for
  some $r$.  Then for all $0\leq n\leq 2r-1$ all cycles in $Z_n^\varepsilon$, in particular,
  every boundary in $B_n^\varepsilon$ comes from an embedded $n$-sphere in $M$.
\end{proposition}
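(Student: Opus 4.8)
The plan is to realize homology classes of the filtered complex as embedded spheres in $M$ by lifting them through the rational Hurewicz map. First I would reduce to the case of a connected target: since the hypothesis $\pi_n(C)=0$ for $0\leq n<r$ is imposed on each connected component $C$ separately, and a simplicial cycle decomposes as a sum of cycles supported on distinct components of $\mathscr{K}_\varepsilon$, it suffices to treat one $(r-1)$-connected component at a time. For $r\geq 2$ such a component is in particular simply connected, so Proposition~\ref{prop:hurewicz} applies; the low cases $n=0,1$ (and the case $r\leq 1$) I would dispatch by hand, since $H_0$ and $H_1$ are controlled directly by $\pi_0$ and $\pi_1$, and an embedded $0$- or $1$-sphere is just a pair of points or an embedded circle.

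Next I would connect the cycles $Z_n^\varepsilon$ of the complex to the homology of $M$ itself. Under the sampling hypotheses recorded above, concretely via the Nerve Theorem quoted earlier, which supplies a homotopy equivalence between $M$ and a suitable realization of $\mathscr{K}_\varepsilon$, every cycle $z\in Z_n^\varepsilon$ determines a class $[z]\in H_n(M)\otimes\B{Q}$. Here I would pass to rational (or real) coefficients so that Proposition~\ref{prop:hurewicz} is literally applicable. With $M$ taken $(r-1)$-connected, that proposition makes the Hurewicz map
\[ h\colon \pi_n(M)\otimes\B{Q}\longrightarrow H_n(M)\otimes\B{Q} \]
surjective for all $n\leq 2r-1$. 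Consequently $[z]$ lies in the rational image of $h$, so after clearing denominators some nonzero integer multiple of $[z]$ is the Hurewicz image of an honest homotopy class, that is, is represented by a map $f\colon S^n\to M$, smooth after approximation, with $f_*[S^n]$ equal to that multiple of $[z]$.

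The last step, and the one I expect to be the main obstacle, is to promote the representing map $f\colon S^n\to M$ to an \emph{embedding}. The Hurewicz theorem only produces a singular sphere, and a generic smooth map $S^n\to M^m$ is an embedding only when the codimension is large enough: general position yields an embedding outright once $m\geq 2n+1$, while for $m\geq 2n$ one must invoke the Whitney trick, using $n\geq 3$ together with the simple connectivity already secured, to cancel the transverse double points in pairs. Since the connectivity hypothesis by itself does not force $\dim M$ to be large relative to $n$, the honest way to close this gap is either to record a dimension assumption on the range of degrees considered, or to exploit that $M$ sits inside $\B{R}^N$ with $N$ large and perturb $f$ within a tubular neighbourhood of $M$ to an embedding before projecting back, which is harmless homologically. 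Either way the output is an embedded $n$-sphere carrying the class $[z]$.

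Finally, the clause about boundaries needs no separate argument: since $B_n^\varepsilon\subseteq Z_n^\varepsilon$, a boundary is in particular a cycle, so the construction above applies verbatim; the only difference is that its homology class vanishes, so the embedded sphere one obtains is null-homologous in $M$ and therefore bounds, which is exactly the feature the subsequent cobordism construction will exploit.
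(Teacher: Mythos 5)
Your proposal follows the same route as the paper's own proof, which consists of just two sentences: for a simply connected component invoke the rational Hurewicz theorem (Proposition~\ref{prop:hurewicz}), and for a component that is not simply connected use the fact that the classical Hurewicz map $\pi_1(C)\to H_1(C)$ is surjective. Everything else you supply---the reduction to one component at a time, the identification (via the Nerve Theorem) of cycles of $\mathscr{K}_\varepsilon$ with classes in $H_*(M)\otimes\B{Q}$, the passage to rational coefficients, and the clearing of denominators so that only an integer multiple of $[z]$ is realized---is left entirely implicit in the paper. These are not detours on your part; they are steps the paper skips, and your write-up is the more careful of the two.

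The obstacle you flag in your third paragraph is genuine, but note that it is a gap in the \emph{paper's} proof (indeed in the statement itself), not a defect peculiar to your argument. The Hurewicz map only produces a continuous map $S^n\to M$, and no amount of connectivity, or passing to multiples, upgrades it to an embedding. A concrete obstruction: take $M=\B{C}P^2$, which is simply connected, so the hypothesis holds with $r=2$ and $n=2\leq 2r-1$ is allowed; the class $3[\B{C}P^1]\in H_2(M)$ and all of its nonzero multiples lie in the image of the Hurewicz map, yet by the Thom conjecture (Kronheimer--Mrowka) the minimal genus of a surface representing $d[\B{C}P^1]$ is $(d-1)(d-2)/2$, so none of these classes is carried by an embedded $2$-sphere. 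Thus the word ``embedded'' cannot be obtained from the Hurewicz argument at all without extra hypotheses of exactly the kind you name (high codimension for general position, or $n\geq 3$ plus enough room for the Whitney trick). The paper's proof silently reads ``comes from an embedded $n$-sphere'' as ``lies in the image of the Hurewicz map,'' i.e.\ is represented by a map from a sphere; with that weaker reading both your argument and the paper's are correct, and your diagnosis of what would be required for the literal statement is the accurate one.
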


\begin{proof}
  If a connected component $C$ is simply connected, i.e. when $r=1$, then we use
  Proposition~\ref{prop:hurewicz}.  If $C$ fails to be simply-connected then the classical
  Hurewicz map $\pi_1(C)\to H_1(C)$ is already surjective for every path connected component
  $C$ of $M$.
\end{proof}

\subsection{Dendrograms of irreducible sets as cobordisms of spheres}

Assume our data $D$ is sampled from a manifold $M$ that satisfies the hypothesis of
Proposition~\ref{prop:useful}.  Assume also that we constructed a filtered complex
$(\mathscr{K}_\varepsilon)$ out of $D$.

\begin{theorem}
  The ramification tree of every irreducible set $A$ in $H_n(\mathscr{K}_\varepsilon)$ can be
  represented by a $n+1$ dimensional cobordism of disjoint $n$-spheres for every
  $0\leq n\leq 2r-1$.
\end{theorem}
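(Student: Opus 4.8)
The plan is to read the ramification tree as an \emph{operadic} recipe for gluing elementary $(n+1)$-cobordisms, using Proposition~\ref{prop:useful} to supply the sphere-shaped boundary data and the minimality built into irreducible sets to control how the pieces fit together.

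First I would produce the boundary spheres. Each leaf of the ramification tree is a single homology class $[\gamma]\in H_n(\mathscr{K}_\eta)$; since $0\leq n\leq 2r-1$, Proposition~\ref{prop:useful} represents $\gamma$ by an embedded $n$-sphere $S_\gamma\subseteq M$, and these will be the boundary components of the cobordism. The crucial reinterpretation is that an irreducible set $B=\{\gamma_0,\dots,\gamma_j\}$ labeling an edge of the tree is precisely a \emph{minimal} linear dependence $\sum_i a_i[\gamma_i]=0$ in homology with all $a_i\neq 0$ and every proper subset independent; equivalently, the combination $\sum_i a_i\gamma_i$ is a boundary while no proper sub-combination is. Minimality is what forces the bounding region to be connected rather than a disjoint union, and hence to be a single elementary piece.

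Next I would fix the building blocks and assemble them. To each edge of the tree I attach a cylinder $S^n\times[0,1]$, and to each internal vertex at which an irreducible set splits into $k$ sub-irreducible sets (as produced by Proposition~\ref{prop:ramification}) I attach a generalized pair of pants $P_k=S^{n+1}\setminus\bigsqcup_{i=0}^{k}B_i$, where the $B_i$ are disjoint open $(n+1)$-balls; this $P_k$ is an $(n+1)$-manifold whose boundary is $k+1$ disjoint $n$-spheres, one facing the parent and one facing each child. At the root, whose irreducible set $A$ is itself a dependence, the parent-facing sphere is capped by a disk $D^{n+1}$, recording that the whole combination is null-homologous. Because an irreducible set ramifies only finitely many times, the tree is finite, and gluing these pieces along matching boundary spheres according to the incidence of the tree yields a connected, compact $(n+1)$-manifold $W$ whose boundary is exactly the disjoint union of the leaf spheres $S_\gamma$. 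By construction the canonical decomposition of $W$ into pants and cylinders returns the vertices and edges of the tree, so $W$ represents the ramification tree.

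The step I expect to be the main obstacle is the compatibility between the matroid side and the cobordism side. On the one hand, the splitting of $\psi_{\eta,\varepsilon}(B)$ into irreducible sets at a ramification is a purely combinatorial, rank-theoretic operation; on the other, attaching a pair of pants is a geometric one, and one must check that the two agree edge by edge, including that the nonzero coefficients $a_i$ of each dependence are carried consistently by the orientations of the pants so that no two bounding regions are forced to intersect. Underlying this is the more basic point that the homological relations only guarantee singular bounding chains a priori; promoting them to honest embedded cobordisms with the prescribed boundary spheres is exactly what the range $0\leq n\leq 2r-1$ buys us, since that is the interval on which Proposition~\ref{prop:useful} yields embedded sphere representatives together with enough ambient room to keep the assembled pieces in general position.
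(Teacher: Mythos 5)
Your proposal is correct and follows essentially the same route as the paper: both use Proposition~\ref{prop:useful} (the rational Hurewicz map) to represent the classes of an irreducible set by $n$-spheres, and realize each irreducible linear dependence by a $k$-punctured $(n+1)$-sphere (your generalized pair of pants $P_k$), assembled according to the ramification tree. Your write-up is in fact more detailed than the paper's own proof, which only exhibits the punctured $(n+1)$-sphere implementing a single irreducible set and leaves the tree-wise gluing --- your cylinders, root cap, and the coefficient/orientation bookkeeping you flag as the main obstacle --- entirely implicit.
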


\begin{proof}
  For every $0\leq n< 2r-1$, and irreducible collection of homology $n$-cycles $A$ there is
  a $n+1$-sphere with $k$-punctures such that punctures represent classes in $A$ and the
  $n+1$-sphere implements the linear dependence of elements in $A$.  This is because every
  cycle $\alpha\in Z_n^\varepsilon$ and boundary $\beta\in B_{n+1}^\varepsilon$, and their
  every scalar multiple, is represented with a sphere via the Hurewicz map.  If a collection
  $A$ in $H_n(\mathscr{K}_\varepsilon)$ is irreducible, the elements in $A$ represented by
  $n$-spheres have to be linearly dependent given by a boundary which is a $n+1$-sphere. The
  result follows.
\end{proof}

\bibliographystyle{siam}
\bibliography{article_biblio}

\end{document}